\newtheorem{teorema}{Theorem}
\newtheorem{proposizione}[teorema]{Proposition}
\newtheorem{definizione}[teorema]{Definition}
\theoremstyle{remark}
\title{$LS$-sequences of points in the unit square}
\author{ {\it Ingrid Carbone,  Maria Rita Iac\`{o}, Aljo\v{s}a Vol\v{c}i\v{c}}}\date{}
\begin{document}

\maketitle

\begin{abstract}
 
\noindent  We define a countable family of sequences of points in the unit square:  the {\it $LS$-sequences of points \`a la Halton}. They  reveal a very strange and interesting behaviour, as well as resonance phenomena, for which we have not found an explanation, so far. We conclude with three open problems.
\end{abstract}

 \noindent \textbf{Keywords} Uniform distribution, Discrepancy, Quasi-Monte Carlo methods.\\

\noindent \textbf{Mathematics Subject Classification} 11K06, 11K31, 11K38, 11K45.

\section{$LS$-sequences  in $[0,1[$}

In \cite{Carbone} the first author introduced a countable family of {\it $LS$-sequences of points} in $[0,1[$, whose construction strongly depends on the  {\it $LS$-sequences of partitions} of $[0,1[$ introduced in the same article. Actually, the former are obtained by reordering the left endpoints of the intervals of the latter. 

In this paper we make the first step in the direction of their generalization to higher dimension, presenting two possible extensions to dimension $2$: the {\it $LS$-point sets \`a la van der Corput}, and the {\it $LS$-sequences \`a la Halton}.

The interest on low discrepancy sequences of points in dimension two or higher is motivated by their application in Quasi-Monte Carlo methods (see \cite{Drmota_Tichy}).

In this section we recall the definition of the {\it $LS$-sequences of partitions} and {\it points} and their main properties. 
 
The {\it $LS$-sequences of partitions} are obtained as a particular case  of the {\it $\rho$-refinements} introduced by the third author in \cite{Volcic}.

\begin{definizione}\label{1} 
\rm{Consider any non trivial finite partition $\rho$ of $[0,1[$. The {\it $\rho$-refinement} of a partition $\pi$ of $[0,1[$ (which will be denoted by $\rho\pi$) is obtained by subdividing only the interval(s) of $\pi$ having maximal length homotetically to $\rho$. 
Denote by $\rho^{n}\pi$  the $\rho$-refinement of
 $\rho^{n-1}\pi$, and by $\{\rho^n \pi\}_{n \ge 1}$ the  sequence of successive {\it $\rho$-refinements}.
}
\end{definizione}

If $\rho=\{[0, \alpha[ ,[\alpha, 1[ \}$ and $\omega=\{[0,1[\}$  is the trivial partition of $[0,1[$, the sequence of successive $\rho$-refinements is actually the  splitting procedure introduced by Kakutani \cite{Kakutani}.

In \cite{Volcic} it has been proved that the sequence $\{\rho^n\omega\}_{n \ge 1}$ is {\it uniformly distributed}, which means that
if $\rho^n\omega=\{[y_i^{(n)}, y_{i+1}^{(n)}[\, :\, 1\leq i\leq t_n\}$, the sequence satisfies
\begin{equation*}\lim_{n\to\infty}\frac{1}{t_n} \sum_{i=1} ^{t_n} \chi_{[a, b[}(y_i^{(n)})=b-a\ ,\end{equation*}
for every pair of real numbers $a,b$, with $0\leq a < b \leq 1$ (see \cite{Volcic} for the first formal definition of this concept).

An {\it $LS$-sequence of partitions}, denoted by  $\{\rho_{L,S}^{n}\}_{n \ge 1}$,   is obtained as successive $\rho_{LS}$-refinements of $\omega$, where $\rho_{L,S}$ is the partition made by $L$ long intervals having length $\gamma$ followed by $S$ short ones having length $\gamma^{2}$, and $\gamma$ is the positive root of the equation $L\gamma + S\gamma^{2}=1$.

It is clear that each partition $\rho^n_{L,S}$ has only long intervals having length $\gamma^n$ and short ones having length $\gamma^{n+1}$.
The sequence $\{t_n\}_{n \ge 1}$ of the number of intervals of $\{\rho_{L,S}^{n}\}_{n \ge 1}$ satisfies  the difference equation $t_n = Lt_{n-1} +St_{n-2}$ with $t_0 = 1$ and $t_1 = L+S$.

In \cite{Carbone} it has been proved that when $ S \le L$ the {\it $LS$-sequence of partitions} $\{\rho^n_{L,S}\}$ has {\it low discrepancy}, namely there exists a constant $C>0$ depending on $L$ and $S$ such that $ {t_n}\,D(\rho^n_{L,S})\le C\,$ for any $n$. Here  $D(\rho^n_{L,S})$ denotes the {\it discrepancy} of $\rho^n_{L,S}$ defined by 

\begin{equation*} D(\rho^n_{L,S})=\sup_{0\leq a<b\leq 1}\left| \frac{1}{t_n}\sum_{j=1}^{t_n}\chi_{[a,b[}(y_j^{(n)})-(b-a) \right| .\end{equation*}

\bigskip
In fact, we have more generally

\begin {teorema} \label{2} 

 If  $S \le L$   there exist  $c_1, c'_1 >0$ such that   $ c'_1  \le  t_n \,D( \rho_{L,S}^n  ) \le \, c_1 $ for any $n \in \mathbb{N}$.

 If  $S =L+1$ there exist $c_2, c'_2>0$ such that $c'_2 \,{\log t_n}  \le t_n \,D( \rho_{L,S}^n  ) \le c_2 \,{\log t_n} $
for any $n \in \mathbb{N}$.

 If  $S\ge L+2$ there exist $c_3, c'_3>0$ such that $\,c'_3\, {t_n}^{1-\tau}\,  \le t_n \, D( \rho_{L,S}^n  )  \le \, c_3 \, t_n^{1-\tau}$ for any $n \in \mathbb{N}$,
where $ 1-\tau = - \frac {\log(S \gamma)} {\log \gamma}>0$.
\end{teorema}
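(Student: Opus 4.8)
The plan is to reduce the computation of $D(\rho^n_{L,S})$ to the analysis of a one–dimensional walk and then to exploit the self-similar structure of the partition. Write the intervals of $\rho^n_{L,S}$ in increasing order as $I_1,\dots,I_{t_n}$ with lengths $\ell_1,\dots,\ell_{t_n}$ (each equal to $\gamma^n$ or $\gamma^{n+1}$), so that $y^{(n)}_{i+1}=\sum_{k\le i}\ell_k$. The function $x\mapsto \frac1{t_n}\#\{j:y^{(n)}_j<x\}-x$ is linear with slope $-1$ on each $I_i$ and jumps up by $1/t_n$ at each $y^{(n)}_i$, so its extreme values are attained at the partition points, and a direct computation gives $t_n D(\rho^n_{L,S})=(\max_i W_i-\min_i W_i)+1$, where $W_i:=\sum_{k=1}^i(1-t_n\ell_k)$ and $W_0=W_{t_n}=0$. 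Hence it suffices to estimate $E_n:=\max_i|W_i|$, since $E_n\le \max_iW_i-\min_iW_i\le 2E_n$. In this walk each long interval contributes a step $-(t_n\gamma^n-1)<0$ and each short interval a step $+(1-t_n\gamma^{n+1})>0$.

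The second step records the exact self-similar decomposition: since only maximal intervals are split, $\rho^n_{L,S}$ is the concatenation of $L$ homothetic copies of $\rho^{n-1}_{L,S}$ rescaled by $\gamma$ followed by $S$ copies of $\rho^{n-2}_{L,S}$ rescaled by $\gamma^2$. Solving $t_n=Lt_{n-1}+St_{n-2}$ gives $t_n=A\gamma^{-n}+B\mu^n$, where $\gamma^{-1}$ and $\mu=-S\gamma$ are the roots of $x^2=Lx+S$; one checks $A=\frac{1+S\gamma}{1+S\gamma^2}>1$ and $B\neq0$ (as $L+S>1$). A short calculation shows that $|\mu|=S\gamma$ satisfies $S\gamma<1,\,=1,\,>1$ exactly when $S\le L,\,S=L+1,\,S\ge L+2$, and that $(S\gamma)^n\asymp t_n^{1-\tau}$. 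Substituting the decomposition into the walk yields transfer identities $W^{(n)}_i=W^{(n-1)}_j+O(|\mu|^{n})$ inside each long block and $W^{(n)}_i=W^{(n-2)}_j+O(|\mu|^{n})$ inside each short block, where $j$ is the local index and the offsets are bounded multiples of $\delta_{n-1}:=t_{n-1}-\gamma t_n=B\mu^{n-1}(1+S\gamma^2)$ and $\delta'_{n-2}:=t_{n-2}-\gamma^2 t_n=B\mu^{n-2}(1-S\gamma^2)(1+S\gamma^2)$.

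For the upper bound the transfer identities give $E_n\le\max(E_{n-1},E_{n-2})+C|\mu|^n$, and iterating produces the three regimes at once: $E_n=O(1)$ when $|\mu|<1$ (the correction is summable), $E_n=O(n)=O(\log t_n)$ when $|\mu|=1$, and $E_n=O(|\mu|^n)=O(t_n^{1-\tau})$ when $|\mu|>1$. For the lower bounds I would argue separately. When $S\le L$ it is enough to test the first interval $[0,\gamma^n[$, which carries the single point $y^{(n)}_1=0$, so $t_nD\ge t_n\gamma^n-1\to A-1>0$. When $S\ge L+1$ I would evaluate $W^{(n)}$ along the nested chain of long/short transition points (the abscissa $L\gamma$ of $\rho^n_{L,S}$, then the corresponding transition inside the first short block, which is a copy of $\rho^{n-2}_{L,S}$, and so on down the scales $n,n-2,n-4,\dots$). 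The value accumulated there is $L(1+S\gamma^2)B\,(\mu^{n-1}+\mu^{n-3}+\cdots)$ up to lower order; the decisive point is that $\mu<0$ makes all these terms share one sign, so they reinforce rather than cancel, giving $E_n\ge c\sum_{j\ge0}|\mu|^{n-1-2j}$, which is $\asymp n$ for $|\mu|=1$ and $\asymp|\mu|^n$ for $|\mu|>1$.

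The main obstacle is precisely this last estimate in the critical case $S=L+1$, where $\mu=-1$: the coherent main term is only of order $n$, while the lower-order corrections coming from the $\delta'$-offsets are a priori also $O(n)$, so one must track them carefully (using $|\mu|=1$ and the explicit constants $1\pm S\gamma^2$) and show the main sum dominates. Controlling these accumulated offsets, and checking that the $O(|\mu|^n)$ in the transfer identities is uniform in $n$, is where the real work lies; the upper bound and the case $S\le L$ are comparatively routine.
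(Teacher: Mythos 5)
First, a point of comparison: this paper does not actually prove Theorem~\ref{2} --- it is quoted from \cite{Carbone}, where the discrepancy of the $LS$-sequences of partitions is established; so your attempt can only be judged on its own merits, not against an in-text argument. On those merits, your skeleton is sound and your computations check out: the exact formula $t_nD(\rho^n_{L,S})=\max_i W_i-\min_i W_i+1$ is the standard extreme-discrepancy identity for ordered points starting at $0$; the self-similar decomposition ($L$ copies of $\rho^{n-1}_{L,S}$ scaled by $\gamma$, then $S$ copies of $\rho^{n-2}_{L,S}$ scaled by $\gamma^2$) is exactly what generates $t_n=Lt_{n-1}+St_{n-2}$; the roots $\gamma^{-1}$ and $\mu=-S\gamma$ of $x^2=Lx+S$, the values $A=\frac{1+S\gamma}{1+S\gamma^2}>1$, $B=-\frac{S\gamma(1-\gamma)}{1+S\gamma^2}\neq 0$, and the offsets $t_{n-1}-\gamma t_n=B\mu^{n-1}(1+S\gamma^2)$, $t_{n-2}-\gamma^2t_n=B\mu^{n-2}(1-S^2\gamma^4)$ are all correct, as is the trichotomy $S\gamma<1,=1,>1 \Leftrightarrow S\le L,\ S=L+1,\ S\ge L+2$ and $(S\gamma)^n\asymp t_n^{1-\tau}$. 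The recursion $E_n\le\max(E_{n-1},E_{n-2})+C|\mu|^n$ then delivers all three upper bounds correctly. One simplification you missed: in the regime $S\le L$ the lower bound is immediate from your own formula, since $t_nD=\max W-\min W+1\ge 1$; the test interval $[0,\gamma^n[$ is unnecessary.

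The step you flag as the main obstacle --- the lower bound when $S=L+1$, $\mu=-1$ --- does close, and more cleanly than you fear, because the constants you already isolated conspire via the identity $1-S\gamma^2=L\gamma$. Along your nested chain of transition points $i_k$ (descending through the first short block at scales $n,n-2,\dots$), the value is exactly $W_{i_k}=L\sum_{j=0}^{k}\delta_{n-1-2j}+L\gamma\sum_{j=0}^{k-1}\delta'_{n-2-2j}+O(1)$, with $\delta_m=B\mu^m(1+S\gamma^2)$ and $\delta'_m=B\mu^m(1-S^2\gamma^4)$: since the exponents in each sum step by $2$ and $\mu<0$, \emph{both} sums are sign-coherent, not just the main one, so no delicate cancellation tracking is needed. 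Moreover the $j$-th offset term is dominated term-by-term by the $j$-th main term with ratio $\frac{L\gamma(1-S^2\gamma^4)}{L(1+S\gamma^2)|\mu|}=\frac{\gamma(1-S\gamma^2)}{S\gamma}=\frac{L\gamma}{S}<1$, uniformly in $n$ and $j$. Hence $|W_{i_k}|\ge\bigl(1-\frac{L\gamma}{S}\bigr)L|B|(1+S\gamma^2)\sum_{j=0}^{k}|\mu|^{n-1-2j}-O(1)$, and taking $k\asymp n/2$ gives $E_n\gtrsim n\asymp\log t_n$ when $|\mu|=1$ and $E_n\gtrsim|\mu|^n\asymp t_n^{1-\tau}$ when $|\mu|>1$, with the $O(1)$ coming from terminating the recursion at $\rho^1_{L,S}$ or $\rho^2_{L,S}$. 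So your proposal is a correct proof strategy with one honestly flagged but genuinely fillable gap; writing out the displayed chain identity and the ratio computation above is all that remains.
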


\smallskip\smallskip
In \cite{Carbone2} the first author (improving  \cite{Carbone}) introduced an algorithm which associates to each {\it $LS$-sequence of partitions} a sequence (of points), called {\it $LS$-sequence of points} and denoted by $\{ \xi_{L,S}^n\}_{n \ge 1}$.
The sequences  $\{ \xi_{L,S}^n\}_{n \ge 1}$ can be seen in terms of the representation in base $L+S$ of a suitable subsequence of natural numbers. 
We describe briefly this construction (see \cite{Carbone2} for further details).  

Any positive integer can be written as 
\begin{equation*}n=\sum_{k=0}^{M} a_k(n)(L+S)^k ,\end{equation*}
 where $a_k(n) \in \{ 0,1,2,\dots, L+S-1 \}$ for all $0 \le k \le M$ and $M={\cal b}\log_{L+S} n \cal c$ (here ${\cal b}\cdot  \cal c$ denotes the integer part, as usual). 
 
       Let us denote by $ \mathbb{N}_{L,S} $ the infinite set of all positive integers $n$,  ordered by magnitude, such that, for each $k \in \mathbb{N}$,  $(a_{k}(n),a_{k+1}(n)) \notin   \{ L, L+1, \dots, L+S-1\} \times \{1,\dots,L+S-1 \}$. 
Define on $ \mathbb{N}_{L,S} $ the function
 
 \begin{equation*} \phi_{L,S}(n)=\sum_{k=0}^M \,  \tilde a_k(n) \, \gamma ^{k+1} \,,\end{equation*} 
where  $ \tilde a_k(n)= a_k(n) $ if $0 \le a_k(n) \le L-1 $, while 
$\tilde a_k(n) =L+\gamma(a_k(n)-L)$ if $ L\le a_k(n) \le L+S-1$.

The sequence $\{\phi_{L,S}(n)\}_{n \ge 0}$ defined on $\mathbb{N}_{L,S}$ is the {\it $LS$-sequence of points}.

The most important property these sequences show is that whenever   the {\it $LS$-sequence of points} has low discrepancy,  the corresponding {\it $LS$-sequence of points} obtained by the algorithm described above has low discrepancy too. 
More precisely, in \cite{Carbone}  upper bounds for their discrepancy have been given (see \cite{Kuipers_Niederreiter} for the general theory on uniform distribution and discrepancy).

 \smallskip 
\begin {teorema} \label{3}

 If  $S \le L$   there exists  $k_1 >0$ such that for any $N \in \mathbb{N}$ we have $  N \, D \Big(\xi_{L,S}^1, \xi_{L,S}^2, \dots, \xi_{L,S}^N  \Big) \le \,  k_1  {\log N} $. 

\smallskip
If  $S =L+1$ there exists $k_2, c'_2>0$ such that for any $N \in \mathbb{N}$ we have
$c'_2 \log N \le N \, D \Big( \xi_{L,S}^1, \xi_{L,S}^2, \dots, \xi_{L,S}^N  \Big) \le k_2 \,{\log^2 N}$.

\smallskip
 If  $S\ge L+2$ there exists $k_3, c'_3>0$ such that for any $N \in \mathbb{N}$ we have
$c'_3\, {N^{1-\tau}} \le N \, D \Big( \xi_{L,S}^1, \xi_{L,S}^2, \dots, \xi_{L,S}^N  \Big)  \le \, k_3 \, {N^{1-\tau}}\, {\log N}$,
 where $ 1-\tau = - \frac {\log(S \gamma)} {\log \gamma}>0$.
\end{teorema}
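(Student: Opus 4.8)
The plan is to carry the bounds of Theorem~\ref{2} on the partitions $\rho_{L,S}^n$ over to the sequence of points $\{\xi_{L,S}^n\}$, exploiting that these points are merely a reordering of the left endpoints of the intervals of the partitions. First I would treat the special indices $N=t_n$. By the construction recalled above (see \cite{Carbone2}), the first $t_n$ terms of $\{\xi_{L,S}^n\}$ are exactly the $t_n$ left endpoints $y_1^{(n)},\dots,y_{t_n}^{(n)}$ of the intervals of $\rho_{L,S}^n$; since the discrepancy of a finite point set is invariant under reordering and the partition discrepancy $D(\rho_{L,S}^n)$ is precisely the (extreme) discrepancy of that endpoint set, one gets the identity
\begin{equation*} D\Big(\xi_{L,S}^1,\dots,\xi_{L,S}^{t_n}\Big)=D(\rho_{L,S}^n). \end{equation*}
Thus Theorem~\ref{2} yields all the stated estimates along the subsequence $N=t_n$, once one recalls that $t_n\asymp\gamma^{-n}$ (so $\log t_n\asymp n$), since $1/\gamma$ is the dominant root of $x^2=Lx+S$.

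For a general $N$ I would interpolate between the consecutive values $t_n\le N<t_{n+1}$ by a van~der~Corput--type digit decomposition. Writing the $N$-th admissible integer in base $L+S$ and using the self-similarity of the $LS$-construction, the first $N$ points split into at most $M=O(\log N)$ consecutive blocks, each of which, after the suitable homothety, reproduces the left endpoints of some $\rho_{L,S}^m$ with $m\le n$. Fixing an interval $[a,b[$, the counting error $\big|\sum_{j=1}^N\chi_{[a,b[}(\xi_{L,S}^j)-N(b-a)\big|$ is then majorised by the triangle inequality with the sum of the errors of the single blocks, each controlled by the corresponding $t_m D(\rho_{L,S}^m)$ of Theorem~\ref{2}. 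Bounding every block by the dominant top-level estimate and multiplying by the number $O(\log N)$ of blocks produces the extra logarithmic factor: $c_1$ per block gives $k_1\log N$ when $S\le L$; $c_2\log t_n$ per block gives $k_2\log^2 N$ when $S=L+1$; and $c_3 t_n^{1-\tau}$ per block gives $k_3 N^{1-\tau}\log N$ when $S\ge L+2$, using $t_n\asymp N$ and $\log t_n\asymp\log N$.

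For the lower bounds I would again use the identity: along $N=t_n$, Theorem~\ref{2} gives $t_n D(\xi_{L,S}^1,\dots,\xi_{L,S}^{t_n})\ge c'_2\log t_n$ (case $S=L+1$) and $\ge c'_3 t_n^{1-\tau}$ (case $S\ge L+2$), and $t_n\asymp\gamma^{-n}\asymp N$, $\log t_n\asymp\log N$ for $t_n\le N<t_{n+1}$ fix the orders $\log N$ and $N^{1-\tau}$. To obtain the bound for every $N$ I would adapt the lower-bound construction for the partitions in \cite{Carbone}, exhibiting for each $N$ an explicit test interval, built from a long interval of $\rho_{L,S}^n$ with $t_n\le N<t_{n+1}$, on which the first $N$ points leave a deviation of the required size.

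I expect the main obstacle to be the block decomposition for general $N$, and in particular the bookkeeping imposed by the restricted digit set $\mathbb{N}_{L,S}$. The admissibility constraint $(a_k,a_{k+1})\notin\{L,\dots,L+S-1\}\times\{1,\dots,L+S-1\}$ means that the blocks are not the clean, equal-weight blocks of the classical base-$b$ van~der~Corput sequence, so one must verify that each block still maps onto a genuine scaled $LS$-partition for Theorem~\ref{2} to apply term by term, while simultaneously keeping track of the two interval lengths $\gamma^m$ and $\gamma^{m+1}$ inside each block and ensuring that the logarithmic loss is not worsened.
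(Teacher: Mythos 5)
The first thing to note is that the paper itself contains no proof of Theorem~\ref{3}: it is quoted from \cite{Carbone} (with the point construction from \cite{Carbone2}), so there is no in-paper argument to compare you against. Measured against the strategy of the cited source, your plan is the right one and essentially the standard one: the identity $D\big(\xi_{L,S}^1,\dots,\xi_{L,S}^{t_n}\big)=D(\rho_{L,S}^n)$ is correct, since by construction the first $t_n$ points of the sequence are precisely the left endpoints of $\rho_{L,S}^n$, discrepancy is invariant under reordering, and the paper's definition of $D(\rho_{L,S}^n)$ is exactly the extreme discrepancy of that endpoint set; likewise $t_n\asymp\gamma^{-n}$ is correct because $1/\gamma$ is the dominant root of $x^2=Lx+S$. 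The digit-block decomposition with $O(\log N)$ blocks, each a translated homothetic copy of a lower-level endpoint set, is the van der Corput--Halton scheme actually used, and your per-block accounting reproduces the three stated upper bounds (in the case $S\ge L+2$ your bound is even wasteful: summing $t_m^{1-\tau}\asymp\gamma^{-m(1-\tau)}$ geometrically would give $O(N^{1-\tau})$ rather than $O(N^{1-\tau}\log N)$, so multiplying by the block count is crude but safe). You also correctly identify the real technical burden, namely verifying that under the admissibility constraint on $\mathbb{N}_{L,S}$ each block genuinely maps onto a scaled $LS$-endpoint set despite the two interval lengths $\gamma^m$ and $\gamma^{m+1}$.

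The one place where your sketch has a genuine hole rather than deferred bookkeeping is the lower bound \emph{for every} $N$. The identity settles only the subsequence $N=t_n$, and no soft interpolation can bridge the gap: for $t_n\le N<t_{n+1}$ one has $N-t_n\asymp t_n$, so the trivial estimate $\big|\sum_{j=1}^{N}\chi-N\lambda\big|\ge\big|\sum_{j=1}^{t_n}\chi-t_n\lambda\big|-(N-t_n)$ loses a term of order $t_n$, which swamps $\log t_n$ (case $S=L+1$) and $t_n^{1-\tau}$ (case $S\ge L+2$) alike. Your proposed remedy, an explicit test interval built from a long interval of $\rho_{L,S}^n$ witnessing the deviation for each intermediate $N$, is exactly what is needed and is how the cited work proceeds, but as written it is an announcement, not an argument: one must actually exhibit the interval and track how the points with indices in $]t_n,N]$ enter it, which is a construction of comparable difficulty to the upper-bound decomposition. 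So: approach sound and aligned with the source, upper bounds essentially complete modulo the acknowledged digit bookkeeping, lower bounds for general $N$ still open in your write-up.
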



\section{$LS$-sequences of points in the unit square}

In this  section we see how the {\it $LS$-sequences of points} can be used to produce sequences in the unit square. 

One possibility is to imitate the van der Corput idea  \cite{vanderCorput}, combining an {\it $LS$-sequence of points} with the points (ordered by magnitude) associated to the Knapowski partition $\left \{\left[ \frac{i-1}{N}, \frac{i}{N}\right[  , 1 \le i \le N\right \}$ of order $N$. 
 Hammersely \cite{Hammersley} extended this definition to higher dimension.

The other possibility is to put on the two coordinates two different {\it $LS$-sequences of points}, imitating what Halton did in \cite{Halton} when he paired two van der Corput sequences having different bases. He proved that whenever these bases are coprime, the sequence has low discrepancy in the unit square.

 The first idea produces the following
\begin{definizione} 
\rm{For each $LS$-sequence of points $\{\xi_{L,S}^n\}_{n \ge 1}$, the finite set of points  
\begin{equation*}P_{L,S}(N)=\bigg \{\Big( \frac{n-1}{N}, \xi_{L,S}^n \Big),\quad n=1,\dots,N\bigg \}\end{equation*}

\smallskip\smallskip
\noindent  is called \emph{$LS$-point set \`{a} la van der Corput} of order $N$ in the unit square.}
\end{definizione}

The main result concerning these {\it $LS$-points sets \`{a} la van der Corput} is given by the following

\begin{proposizione}\label{5} 

If  $S \le L$   there exists  $C_1 >0$ such that for any $N \in \mathbb{N}$ we have $  N \, D \Big(P_{L,S}(N)  \Big) \le \,  C_1  {\log N} $. 

\smallskip
If  $S =L+1$ there exists $C_2>0$ such that for any $N \in \mathbb{N}$ we have
$N \, D \Big(P_{L,S}(N)  \Big) \le C_2 \,{\log^2 N}$.

\smallskip
 If  $S\ge L+2$ there exists $k_3, c'_3>0$ such that for any $N \in \mathbb{N}$ we have
$ N \, D \Big( P_{L,S}(N)  \Big)  \le \, C_3 \, {N^{1-\tau}}\, {\log N}$,
 where $ 1-\tau = - \frac {\log(S \gamma)} {\log \gamma}>0$.
\end{proposizione}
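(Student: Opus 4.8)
The plan is to reduce the two-dimensional discrepancy of the van der Corput-type construction to the one-dimensional discrepancy bounds already available in Theorem \ref{3}, exploiting the special structure of the first coordinate. The key observation is that the first coordinates of $P_{L,S}(N)$ are exactly the equally spaced points $0, \tfrac{1}{N}, \dots, \tfrac{N-1}{N}$, so any axis-parallel box $[0,a[\times[0,b[$ in the unit square intersects the point set in a way that couples the two coordinates only through the index $n$.

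**First** I would set up the standard reduction. For a test box $[0,a[\times[0,b[$ with $0\le a,b\le 1$, the counting function is
\begin{equation*}
A([0,a[\times[0,b[;\,N)=\#\{\,1\le n\le N:\ \tfrac{n-1}{N}<a\ \text{and}\ \xi_{L,S}^n<b\,\}.
\end{equation*}
The condition $\tfrac{n-1}{N}<a$ restricts $n$ to an initial segment $1\le n\le M$ where $M=\lceil aN\rceil$ (up to a boundary term of size $O(1)$, coming from the difference between $aN$ and its integer part). Thus the two-dimensional local discrepancy for this box equals, up to that $O(1/N)$ boundary error in the $a$-direction, the one-dimensional local discrepancy of the truncated sequence $\xi_{L,S}^1,\dots,\xi_{L,S}^M$ measured against the interval $[0,b[$. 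Taking the supremum over all boxes, I would bound $N\,D(P_{L,S}(N))$ by the supremum over $M\le N$ of the one-dimensional star-discrepancy $M\,D^*(\xi_{L,S}^1,\dots,\xi_{L,S}^M)$, plus a term of order $1$ absorbed into the constants. This is precisely the classical Hammersley/van der Corput trick.

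**Next** I would invoke Theorem \ref{3}, which controls exactly those one-dimensional quantities, and note that all three bounds there are (up to the logarithmic and power factors) monotone increasing in the length of the initial segment, so that the supremum over $M\le N$ is attained essentially at $M=N$. In the regime $S\le L$ this yields $N\,D(P_{L,S}(N))\le C_1\log N$; in the regime $S=L+1$ the one-dimensional bound $k_2\log^2 N$ carries over to $C_2\log^2 N$; and in the regime $S\ge L+2$ the bound $k_3 N^{1-\tau}\log N$ carries over to $C_3 N^{1-\tau}\log N$. The passage from the star-discrepancy $D^*$ (anchored boxes $[0,a[\times[0,b[$) to the general extreme discrepancy $D$ over arbitrary boxes costs only a constant factor in dimension two, which I would fold into the $C_i$.

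**The main obstacle** I anticipate is handling the boundary term rigorously: because the first coordinate takes only the values $\tfrac{n-1}{N}$, the set $\{n:\tfrac{n-1}{N}<a\}$ has cardinality $\lceil aN\rceil$, and the deviation $|\lceil aN\rceil-aN|<1$ must be controlled uniformly in $a$ so that it contributes only $O(1)$ (equivalently $O(1/N)$ before multiplying by $N$) to $N\,D(P_{L,S}(N))$. A careful triangle-inequality splitting—separating the error from truncating the index range from the error of the one-dimensional sequence on the truncated range—is needed so that the $O(1)$ term is genuinely absorbed and does not degrade the logarithmic or fractional-power rates. Once this bookkeeping is done, the three cases follow immediately from the corresponding cases of Theorem \ref{3}.
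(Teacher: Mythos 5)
Your proposal is correct and is essentially the paper's own argument: the authors likewise fix an anchored box $[0,a[\times[0,b[$, use the triangle inequality to split off the $O(1/N)$ truncation error in the first coordinate (with $\lfloor Na\rfloor+1$ playing the role of your $\lceil aN\rceil$), bound the remaining term by the one-dimensional discrepancy of the initial segment via Theorem \ref{3}, and use monotonicity of the bounds $f_{L,S}$ to replace $f_{L,S}(\lfloor Na\rfloor+1)$ by $c'f_{L,S}(N)$ before taking the supremum over boxes. Your explicit remark on the passage from star to extreme discrepancy is a minor refinement the paper glosses over, not a different route.
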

\begin{proof} Let us fix a rectangle $R=[0, a[\times [0, b[$ and an $LS$-sequence of points $\{ \xi_{L,S}^n\}_{n \ge 1}$. 
If we denote by $f_{L,S}$ one of the three upper bounds appearing in Theorem \ref{3}, a simple calculation gives. 

\begin{equation*}
\left| \frac{1}{N} \sum_{j=1}^{N} \chi_R\left(\frac{j-1}{N}, \xi_{L,S}^{j}\right)-ab \right|= \left| \frac{1}{N} \sum_{j=1}^{N} \chi_{[0, a[}\left(\frac{j-1}{N}\right)\chi_{[0,b[}(\xi_{L,S}^{j})-ab \right| \end{equation*} 

\begin{equation*}\leq \left| \frac{1}{N} \sum_{j=1}^{{\cal b}Na{\cal c}+1} \chi_{[0,b[}(\xi_{L,S}^{j})-\frac{{\cal b}Na{\cal c}+1}{N}b \right|+ \left| \frac{{\cal b}Na{\cal c}+1}{N}b-ab \right|\end{equation*}

\begin{equation*}=\frac{{\cal b}Na{\cal c}+1}{N} \left| \frac{1}{{\cal b}Na{\cal c}+1} \sum_{j=1}^{{\cal b}Na{\cal c}+1} \chi_{[0,b[}(\xi_{L,S}^{j})- b\right| + b\left| \frac{{\cal b}Na{\cal c}+1}{N}-a \right|\end{equation*}

\begin{equation*}\le \frac{1}{N} \left| \frac{1}{{\cal b}Na{\cal c}+1} \sum_{j=1}^{{\cal b}Na{\cal c}+1} \chi_{[0,b[}(\xi_{L,S}^{j})- b\right| + \frac{b}{N}\end{equation*}

\begin{equation*}\le \frac{1}{N} f_{L,S}({\cal b}Na{\cal c}+1) + \frac{1}{N}\le c' \frac{f_{L,S}(N)}{N}\ .\end{equation*}

\bigskip
Taking the supremum over all the rectangles $R$ in the unit square, the theorem is completely proved. 
\end{proof}

{\bf Remark} Comparing the above proposition to Theorem \ref{3}, we conclude that any {\it $LS$-point set \`{a} la van der Corput} has {\it low discrepancy} when $S \le L$.

\bigskip
Let us now give the second and more interesting generalization.

\begin{definizione}\label{6} 
\rm{Given two $LS$-sequences of points $\{\xi_{L_1,S_1}^{n}\}_{n \ge 1}$ and 
 $\{\xi_{L_2,S_2}^{n}\}_{n \ge 1}$, the sequence
  \begin{equation*}\{\xi_{L_1,S_1}^{n}, \xi_{L_2,S_2}^{n}\}_{n\geq 1}\end{equation*}

\smallskip\smallskip\smallskip
\noindent is called \emph{$LS$-sequence of points \`a la Halton} in the unit square.}
\end{definizione}

\begin{figure}[h!]
\centering
\begin{minipage}[c]{.45\textwidth}
\centering
\includegraphics[scale=0.6, keepaspectratio]{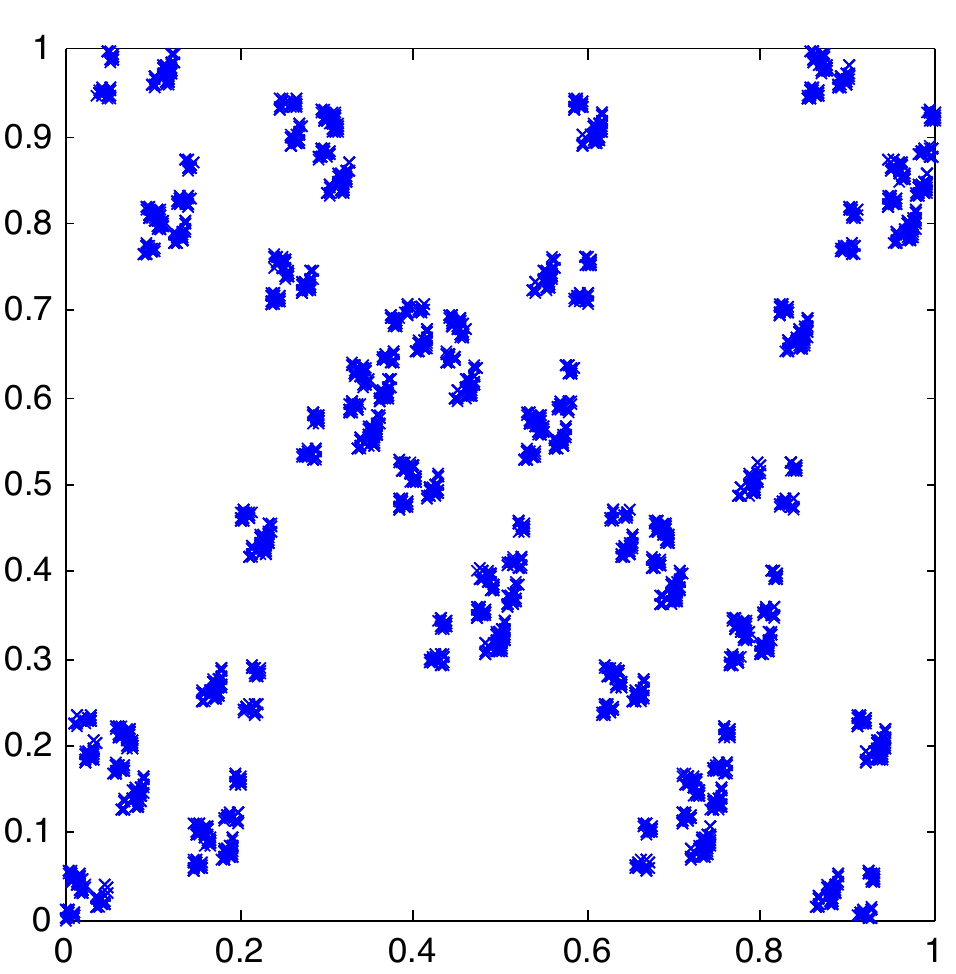}
\caption{\small{$\{\xi_{1,1}^{n},\xi_{4,1}^{n} \}_{n \ge 1}$}}
\end{minipage}%
\hspace{8mm}%
\begin{minipage}[c]{.45\textwidth}
\centering
\includegraphics[scale=0.43, keepaspectratio]{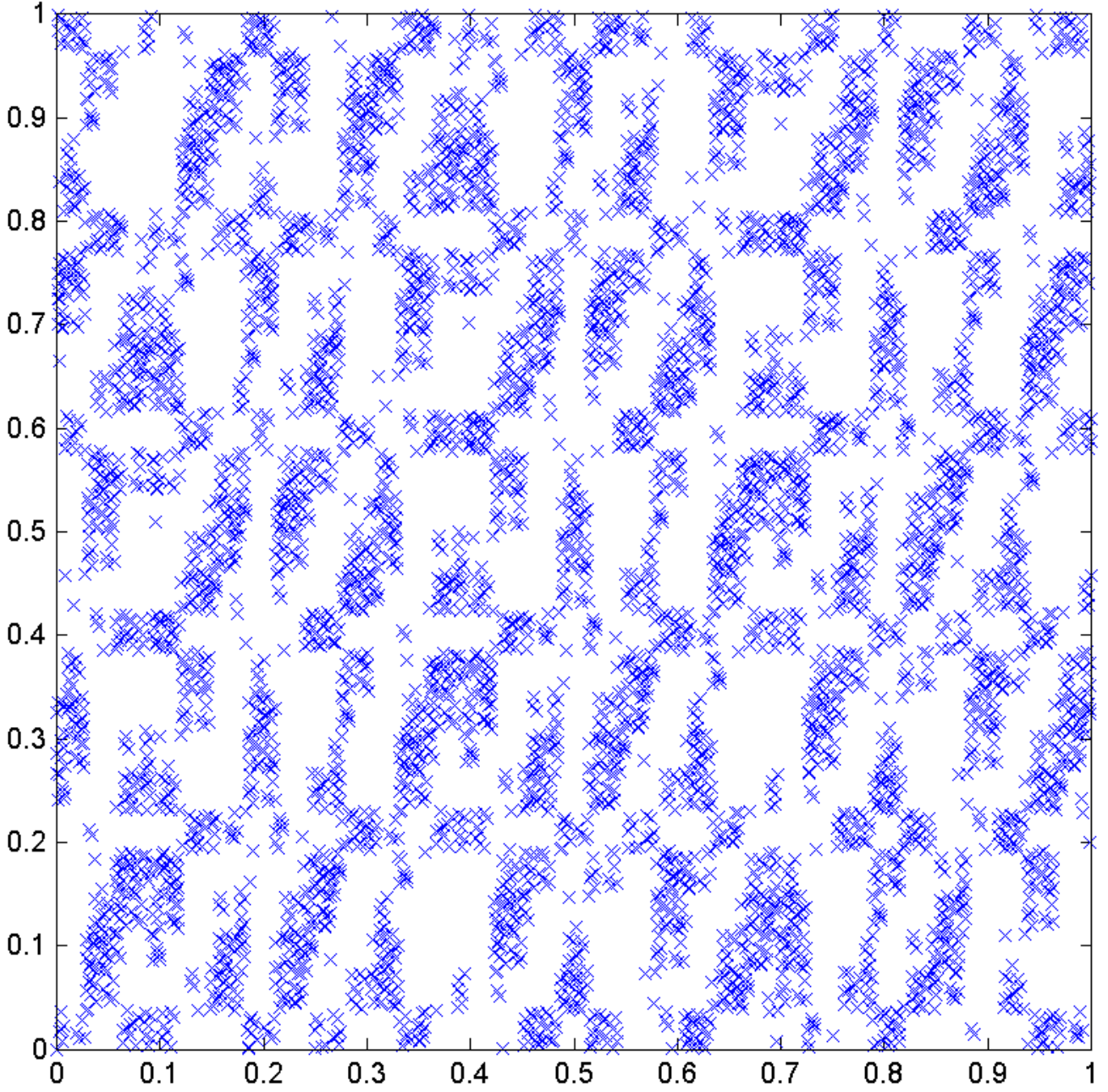}
\caption{\small{$\{\xi_{3,1}^{n},\xi_{5,1}^{n} \}_{n \ge 1}$}}
\end{minipage}
\end{figure}
\begin{figure}[h!]
\centering
\begin{minipage}[c]{.45\textwidth}
\centering
\includegraphics[scale=0.6, keepaspectratio]{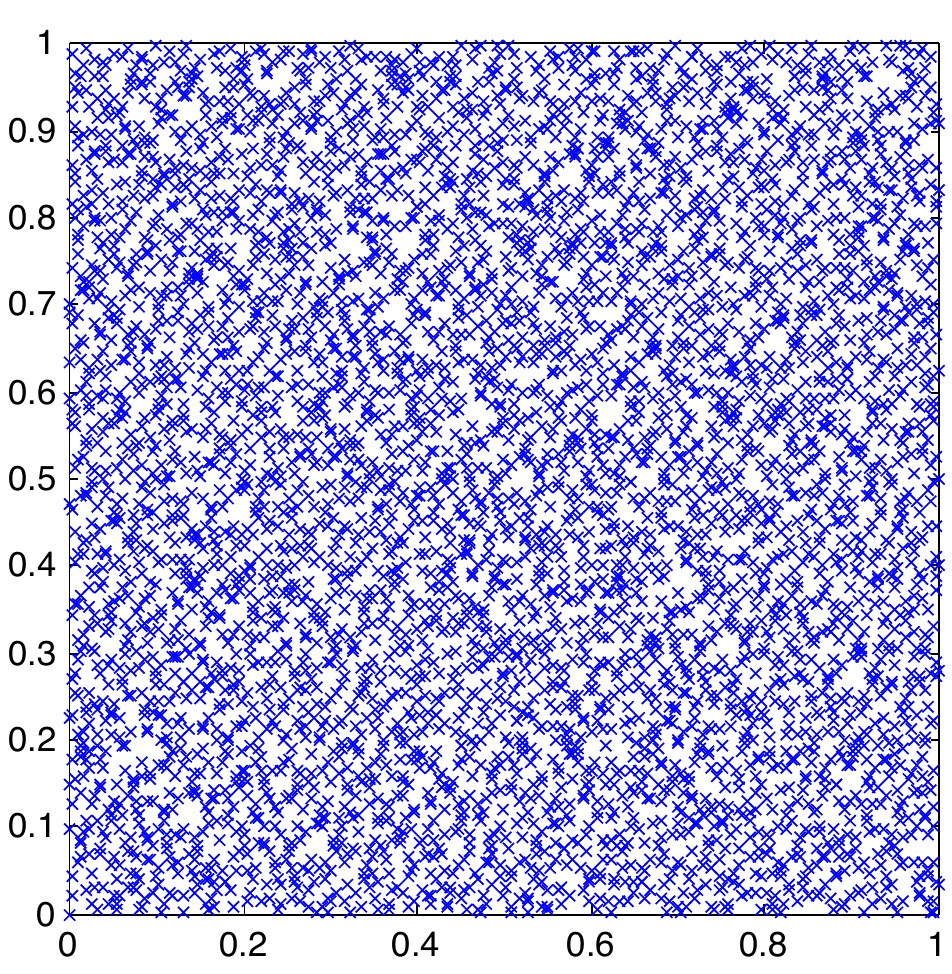}
\caption{\small{$\{\xi_{1,1}^{n},\xi_{5,1}^{n} \}_{n \ge 1}$}}
\end{minipage}%
\hspace{8mm}%
\begin{minipage}[c]{.45\textwidth}
\centering
\includegraphics[scale=0.6, keepaspectratio]{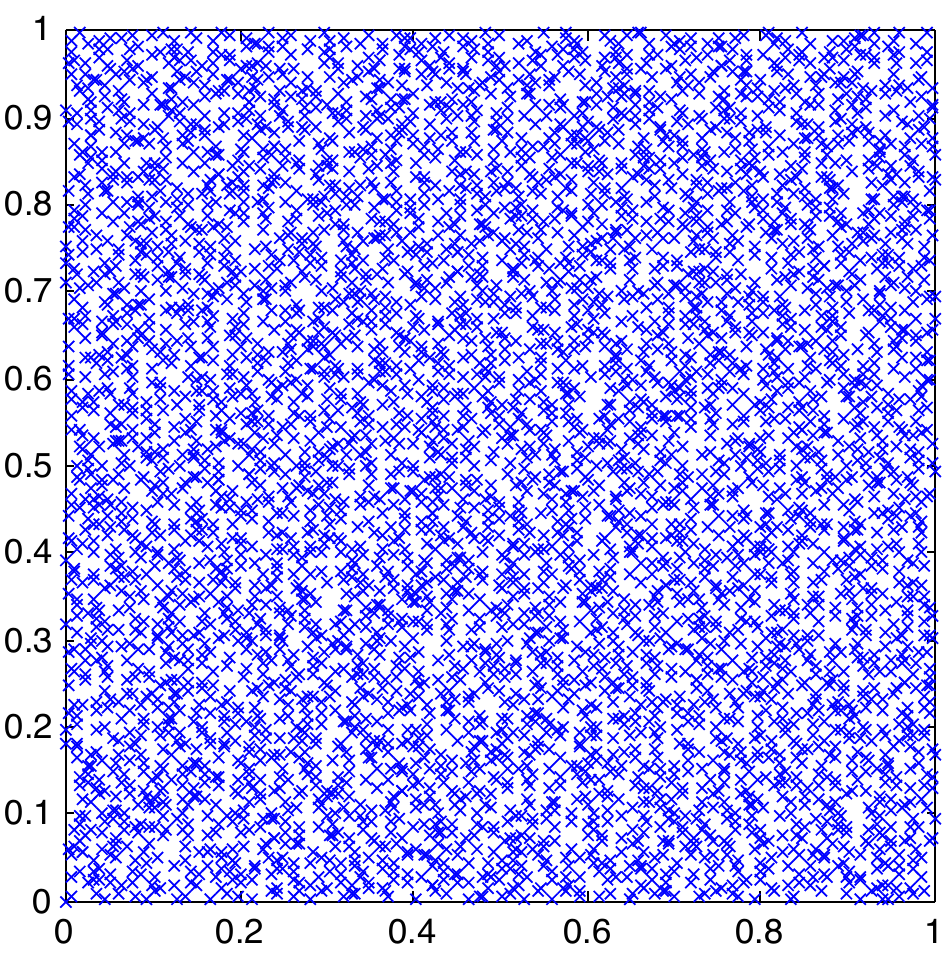}
\caption{\small{$\{\xi_{3,1}^{n},\xi_{4,1}^{n} \}_{n \ge 1}$}}
\end{minipage}
\end{figure}

Halton proved that  two van der Corput sequences  generate a uniformly distributed sequence in the square if and only if they have coprime bases. 

The situation is not clear at all if we use pairs of {\it $LS$-sequences of points}, as it can be seen in Figures 1 and 2, where numerical experiments suggest that there is no uniform distribution, while Figures 3 and 4 suggest that for those pairs of {\it $LS$-sequences of points} we do have uniform distribution.

  The following proposition may give an explanation for the ``resonance"  which appears in  Figure 1.
   
\begin{proposizione}\label{7} 
If for each $n\in\mathbb{N}$ we denote by $t_{n}$ the total number of intervals of $\rho_{1,1}^{n}$ and by $t_n^{'}$ the total number of intervals of the partition $\rho_{4,1}^{n}$, then we have that
$$t^{'}_n=t_{3n}\qquad \textrm{for\ all}\ n \in \mathbb{N}\ .$$
\end{proposizione}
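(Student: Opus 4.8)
The plan is to exploit the linear difference equations obeyed by the two counting sequences. As recalled in Section~1, the number of intervals of $\rho_{1,1}^{n}$ satisfies $t_n = t_{n-1} + t_{n-2}$ with $t_0 = 1$, $t_1 = 2$, while the number of intervals of $\rho_{4,1}^{n}$ satisfies $t_n' = 4\,t_{n-1}' + t_{n-2}'$ with $t_0' = 1$, $t_1' = 5$. Since a second-order constant-coefficient linear recurrence is completely determined by its two initial values, it suffices to show that the thinned sequence $u_n := t_{3n}$ obeys the \emph{same} recurrence and the \emph{same} initial data as $\{t_n'\}$; uniqueness will then force $u_n = t_n'$ for all $n$, which is exactly the assertion.

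First I would check the initial conditions, which are immediate from the opening Fibonacci-type values $t_0, t_1, t_2, t_3 = 1, 2, 3, 5$: one has $u_0 = t_0 = 1 = t_0'$ and $u_1 = t_3 = 5 = t_1'$.

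The heart of the argument is to identify the recurrence governing $u_n = t_{3n}$. Writing $t_n = A\phi^{n} + B\psi^{n}$ by Binet's formula, where $\phi,\psi$ are the roots of $x^2 - x - 1 = 0$, I get $u_n = A(\phi^3)^{n} + B(\psi^3)^{n}$, so that $\{u_n\}$ is governed by the monic polynomial with roots $\phi^3,\psi^3$, namely $x^2 - (\phi^3 + \psi^3)\,x + (\phi\psi)^3$. Using $\phi + \psi = 1$ and $\phi\psi = -1$ yields $\phi^3 + \psi^3 = (\phi+\psi)^3 - 3\phi\psi(\phi+\psi) = 1 + 3 = 4$ and $(\phi\psi)^3 = -1$, so the characteristic polynomial of $\{u_n\}$ is $x^2 - 4x - 1$. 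This is precisely the characteristic polynomial of the $(4,1)$-recurrence, i.e.\ $u_n = 4u_{n-1} + u_{n-2}$. Equivalently, and with no appeal to Binet, one may iterate the recurrence $t_k = t_{k-1} + t_{k-2}$ three times to rewrite $t_{3n}$ in terms of $t_{3n-3}$ and $t_{3n-6}$, obtaining $t_{3n} = 4\,t_{3n-3} + t_{3n-6}$; this is the only explicit computation in the proof and is entirely routine.

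Having shown that $\{u_n\}$ and $\{t_n'\}$ solve the same recurrence from the same initial data, I conclude $t_n' = t_{3n}$ for every $n$. I expect no genuine obstacle here beyond careful index bookkeeping in the identity $t_{3n} = 4\,t_{3n-3} + t_{3n-6}$. It is worth noting that the same computation explains the observed ``resonance'': the pair $(4,1)$ is singled out because $\phi^3 + \psi^3 = 4$ and $(\phi\psi)^3 = -1$, equivalently because the characteristic roots satisfy $\gamma_{4,1} = \gamma_{1,1}^{\,3}$, so that $\rho_{4,1}$ reproduces the length ratios of $\rho_{1,1}$ after exactly three refinement steps.
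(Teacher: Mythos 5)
Your proof is correct, and its skeleton coincides with the paper's: the published proof is a two-step induction resting on exactly your identity $t_{3n}=4t_{3n-3}+t_{3n-6}$ (asserted there without derivation) together with the base value $t'_1=5=t_3$, and your ``same recurrence, same initial data, hence equal by uniqueness'' argument is just that induction repackaged, since uniqueness for a second-order linear recurrence \emph{is} two-step induction. What you do differently is how you obtain the key identity: the paper leaps from the two recurrences to $t_{3n}=4t_{3n-3}+t_{3n-6}$ with a bare ``implies'', which in fact requires the triple iteration of $t_k=t_{k-1}+t_{k-2}$ that you sketch; your primary route via Binet and symmetric functions ($\phi^3+\psi^3=4$, $(\phi\psi)^3=-1$) is a derivation the paper does not take, and it buys the conceptual explanation $\gamma_{4,1}=\gamma_{1,1}^{3}$ of the resonance, which the paper leaves entirely implicit. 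You are also more careful on the bookkeeping: a second-order recurrence needs two initial values, and you check both $u_0=t_0=1=t'_0$ and $u_1=t_3=5=t'_1$, whereas the paper verifies only the case $n=1$ yet silently uses $t'_{n-2}=t_{3(n-2)}$ in the inductive step (the missing second base case, e.g.\ $t'_2=21=t_6$ or $t'_0=1=t_0$, is left to the reader). So your proposal not only matches the published argument but fills in its two elisions.
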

\begin{proof} We use induction on $n \ge 1$.
If $n=1$ we have  $t^{'}_1=5=t_{3}$. 

Assume $t^{'}_{n-1}=t_{3(n-1)}$ holds. The relations $t_n=t_{n-1}+t_{n-2}$  and $ t^{'}_n=4t^{'}_{n-1}+t^{'}_{n-2} $ implies 
$t_{3n}= 4t_{3n-3}+t_{3n-6}$.
From the inductive assumption we have $4t_{3(n-1)}+t_{3(n-2)}=4t^{'}_{n-1}+t^{'}_{n-2}$, and therefore the proposition is proved. 
\end{proof}

{\bf Open problems}\\

1. When is $\{\xi_{L_1,S_1}^{n}, \xi_{L_2,S_2}^{n}\}_{n\geq 1}$ uniformly distributed?\\

2. What is its discrepancy?\\

3. What happens in higher dimension?

\bigskip
\noindent {\it Ingrid Carbone, Maria Rita Iac\`{o}*, Aljo\v{s}a Vol\v{c}i\v{c} }\\ 
University of Calabria, Department of Mathematics\\ 
Ponte P. Bucci Cubo 30B\\
87036 Arcavacata di Rende (Cosenza), Italy\\
E-mail: \texttt{i.carbone@unical.it}\\
E-mail: \texttt{volcic@unical.it}\\
E-mail: \texttt{iaco@mat.unical.it}

\bigskip
\noindent * 
  Research supported by the Doctoral Fellowship in Mathematics and Informatics of University of Calabria in cotutelle with Graz University of Technology, Institute of Mathematics A, Steyrergasse 30, 8010 Graz, Austria.

\end{document}